\newcommand{\R}[0]{\mathbb R}
\newcommand{\Ds}[0]{\mathcal D}
\newtheorem{Th}{Theorem}[section]
\newtheorem{Lemma}{Lemma}[section]
\newtheorem{Prop}[Lemma]{Proposition}
\begin{document}

\title{On the regularity of the solution map of the porous media equation}
\author{H. Inci}

\maketitle

\begin{abstract}
In this paper we consider the incompressible porous media equation in the Sobolev spaces $H^s(\R^2), s > 2$. We prove that for $T > 0$ the time $T$ solution map $\rho_0 \mapsto \rho(T)$ is nowhere locally uniformly continuous. On the other hand we show that the particle trajectories are analytic curves in $\R^2$.
\end{abstract}

\section{Introduction}\label{section_introduction}

The initial value problem for the incompressible porous media equation in the Sobolev space $H^s(\R^2), s > 2$ is given by
\begin{align}
\label{porous}
\begin{split}
 \rho_t + (u \cdot \nabla) \rho &=0 \\
\operatorname{div} u &=0 \\
u &= -\nabla p - \left(\begin{array}{c} 0 \\ \rho \end{array}\right) \\
\rho(0) & = \rho_0
\end{split}
\end{align}
where $\rho:\R \times \R^2 \to \R$ is the density, $u:\R \times \R^2 \to \R^2$ the velocity of the flow, $p:\R \times \R^2 \to \R$ the pressure. Local well-posedness for $\rho$ lying in $C^0([0,T];H^s(\R^2))$ is known -- see \cite{cordoba}. For $T > 0$ we denote by $U_T \subseteq H^s(\R^2)$ the set of initial values $\rho_0$ for which the solution of \eqref{porous} exists longer than time $T$. We can state our main theorem as
\begin{Th}\label{th_nonuniform}
Let $s > 2$ and $T > 0$. Then the time $T$ solution map
\[
 U_T \to H^s(\R^2),\quad \rho_0 \mapsto \Phi_T(\rho_0)=\rho(T)
\]
is nowhere locally uniformly continuous. Here $\rho(T)$ is the value of $\rho$ at time $T$.
\end{Th}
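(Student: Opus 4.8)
The plan is to establish the negation of uniform continuity on every ball. Fixing an arbitrary $\rho_0\in U_T$, I will produce a radius $r>0$ and two sequences $\rho_0^{+,n},\rho_0^{-,n}\in B(\rho_0,r)\subseteq U_T$ with $\|\rho_0^{+,n}-\rho_0^{-,n}\|_{H^s}\to 0$ yet $\liminf_n\|\Phi_T(\rho_0^{+,n})-\Phi_T(\rho_0^{-,n})\|_{H^s}>0$. Since uniform continuity on a neighborhood descends to any smaller ball, it suffices to do this for arbitrarily small $r$; and because local well-posedness together with continuous dependence makes the existence time lower semicontinuous, $U_T$ is open, so for $r$ small (depending on $\rho_0,T$) the ball $B(\rho_0,r)$ indeed lies in $U_T$. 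First I would fix bumps $a,B\in C_c^\infty(\R^2,\R)$ localized near an arbitrary point $x^\ast$ and set, for small parameters $\varepsilon,c>0$ and large $n$,
\[
\rho_0^{\pm,n}=\rho_0+\varepsilon n^{-s}a(x)\cos(n x_1)\pm c\,n^{-1}B(x).
\]
The common high-frequency part $w_n=\varepsilon n^{-s}a\cos(nx_1)$ has $\|w_n\|_{H^s}\sim\varepsilon$, the offset $b_n^{\pm}=\pm c\, n^{-1}B$ has $\|b_n^{\pm}\|_{H^s}\to 0$, so both data sit in $B(\rho_0,r)$ for suitable $\varepsilon,c$, while the initial gap $\|\rho_0^{+,n}-\rho_0^{-,n}\|_{H^s}=2c\,n^{-1}\|B\|_{H^s}\to 0$. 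There is no conflict with mere continuity of the solution map: $w_n$ has no strong $H^s$ limit, so neither sequence converges, hence pointwise continuity says nothing about the images.

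The engine is the difference in transport speed created by the low-frequency offset. Writing the solution through the Lagrangian representation $\rho^{\pm,n}(T)=\rho_0^{\pm,n}\circ(\phi_T^{\pm,n})^{-1}$, where $\phi_t^{\pm,n}\in\Ds^s$ (the group of $H^s$-diffeomorphisms) is the flow of the divergence-free field $u[\rho^{\pm,n}]$, I would record that $u$ is a zero-order operator of $\rho$, namely $\hat u=(\xi_1\xi_2,-\xi_1^2)|\xi|^{-2}\hat\rho$ in Fourier, so that $\|u[\rho]\|_{H^s}\lesssim\|\rho\|_{H^s}$ and the flows $\phi_T^{\pm,n}$ are bounded in $\Ds^s$ uniformly in $n$. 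The two velocity fields differ at $t=0$ by $u[b_n^{+}]-u[b_n^{-}]=2c\,n^{-1}V$ with $V=(\dx{1}\dx{2}\Delta^{-1}B,-\dx{1}^2\Delta^{-1}B)$ a fixed smooth field. Since this offset stays at amplitude $\sim n^{-1}$ and of fixed regularity along the evolution, integrating the flow equation yields
\[
\phi_T^{+,n}-\phi_T^{-,n}=n^{-1}W+o(n^{-1}),\qquad W(x^\ast)\neq 0
\]
in $C^1$ for a generic choice of $B$. Crucially the high-frequency part contributes only $O(\varepsilon n^{-s})$ to each velocity, hence a phase contribution $O(\varepsilon n^{1-s})\to 0$ because $s>2$, so it cannot spoil this leading behaviour.

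I would then feed this flow gap into the composition. Because $\rho_0^{+,n}-\rho_0^{-,n}\to 0$ in $H^s$ and right composition with the uniformly bounded family $(\phi_T^{\pm,n})^{-1}$ is bounded, replacing $\rho_0^{+,n}$ by $\rho_0^{-,n}$ costs only $o(1)$, leaving
\[
\Phi_T(\rho_0^{+,n})-\Phi_T(\rho_0^{-,n})=\rho_0^{-,n}\circ(\phi_T^{+,n})^{-1}-\rho_0^{-,n}\circ(\phi_T^{-,n})^{-1}+o(1).
\]
Here the same function, whose frequency-$n$ content is $w_n$, is composed with two diffeomorphisms differing by $\sim n^{-1}$; by the chain rule the displacement $n^{-1}W$ acting on $\cos(nx_1)$ becomes a phase shift $(n,0)\cdot n^{-1}W(x^\ast)=W_1(x^\ast)$, which is $O(1)$ and independent of $n$. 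Thus the frequency-$n$ part of the difference is $\approx\varepsilon n^{-s}a\bigl(\cos(nx_1-\theta_+)-\cos(nx_1-\theta_-)\bigr)$ with $\theta_+-\theta_-\to W_1(x^\ast)\neq 0$, whose $H^s$ norm is $\sim\varepsilon\,|\sin((\theta_+-\theta_-)/2)|$, a positive constant; by almost-orthogonality the lower-frequency contributions coming from the $\rho_0$- and $b_n^{\pm}$-evolution cannot cancel it. This gives the lower bound and proves the statement. The hard part is the middle step: justifying $\phi_T^{+,n}-\phi_T^{-,n}=n^{-1}W+o(n^{-1})$ with $W(x^\ast)\neq 0$ uniformly in $n$, that is, controlling the flows in $\Ds^s$ uniformly, extracting the $n^{-1}$ leading term of their difference through the transported offset $V$, and checking that the high-frequency self-interaction of $w_n$ and its nonlinear coupling with $b_n^{\pm}$ are genuinely negligible at this order.
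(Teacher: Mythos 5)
Your overall architecture matches the paper's: pass to the Lagrangian picture $\rho(T)=\rho_0\circ\varphi(T)^{-1}$, perturb the data by an $O(1/n)$ offset so that the two flows differ by $\sim 1/n$ at a point, and amplify this by a feature of the data living at scale $1/n$. The paper uses bumps $w_n$ supported in balls of radius $\sim 1/n$ (whose images under the two flows become \emph{disjointly supported}), whereas you use oscillations $\cos(nx_1)$ and a phase-shift/almost-orthogonality argument. Both can in principle work, but your version has a genuine gap at exactly the step you flag as ``the hard part'': the non-degeneracy $W(x^\ast)\neq 0$ (in fact you need $W_1(x^\ast)\neq 0$). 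Saying this holds ``for a generic choice of $B$'' is not a proof: $W$ is the value at $x^\ast$ of the derivative $d_{\rho_0}\Psi$ of the nonlinear time-$T$ flow map applied to $B$, and there is no a priori reason this linear map cannot annihilate the point $x^\ast$ for every $B$ at a given $\rho_0$. This is precisely why the paper proves the statement only for a \emph{dense} set of base points (Lemma \ref{lemma_dense}), using two special inputs you do not have: analyticity of $\rho_0\mapsto\Psi(\rho_0)$ (from the ODE reformulation), and the scaling identity $\Psi(t\bar\rho)=\varphi(t;\bar\rho)$, which makes $d_0\Psi(\bar\rho)=(-\mathcal R_1\mathcal R_2\bar\rho,\mathcal R_1^2\bar\rho)$ explicitly computable and visibly non-degenerate; analyticity of $t\mapsto (d_{t\rho_\bullet}\Psi(\bar\rho))(x^\ast)$ then propagates non-vanishing from $t=0$ to parameters $t_n\uparrow 1$. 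Density suffices for ``nowhere locally uniformly continuous''; without some such argument your claim for arbitrary $\rho_0$ is unsupported.

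Two further points need repair. First, the cross term between $w_n$ and the offset in the Taylor expansion of the flow map is $O(\varepsilon\, n^{-1})$, i.e.\ of the \emph{same} order in $n$ as the main term $n^{-1}W$, not $o(n^{-1})$; it must be beaten by choosing the amplitude $\varepsilon$ of $w_n$ small compared with $|W(x^\ast)|$ (this is what the paper's choice of $R_\ast$ with $\tilde C K\|\bar\rho\|_s R_\ast < m/4$ accomplishes), so your bookkeeping of the error as $o(n^{-1})$ is wrong as stated though fixable. Second, your appeal to almost-orthogonality to rule out cancellation from the evolved $\rho_0$ and $b_n^{\pm}$ parts is not justified: composition with an $H^s$-diffeomorphism smears frequencies, the flows $\phi^{\pm,n}$ do not converge (since $w_n$ does not), and $\rho_0\circ(\phi^{+,n})^{-1}-\rho_0\circ(\phi^{-,n})^{-1}$ is not obviously $o(1)$ in $H^s$ because composition is continuous but not uniformly so on $H^s\times\Ds^s$. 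The paper sidesteps this entirely by taking $\rho_\bullet$ compactly supported and placing $x^\ast$ at distance $>2$ from $\operatorname{supp}\rho_\bullet$, so that after composition the $\rho_\bullet$ and $w_n$ contributions have disjoint supports and the $H^s$ norm of the sum is bounded below by that of the $w_n$ piece alone. You would need an analogous localization (or a genuine frequency-separation lemma) to close your argument.
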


Our method relies on a geometric formulation of \eqref{porous}. This approach was made popular by the work of Arnold \cite{arnold} for the incompressible Euler equations and subsequently by \cite{ebin_marsden}. In the following we will work this out for \eqref{porous}. Taking the divergence in the third equation (Darcy's law) in \eqref{porous} we have
\[
 -\Delta p=\partial_2 \rho
\]
Reexpressing $\nabla p$ gives
\[
 u=\left(\begin{array}{c} u_1 \\ u_2 \end{array}\right) = \left( \begin{array}{c} -(-\Delta)^{-1} \partial_1 \partial_2 \rho \\ -(-\Delta)^{-1} \partial_2^2 \rho - \rho \end{array}\right) = \left( \begin{array}{c} -(-\Delta)^{-1} \partial_1 \partial_2 \rho \\ (-\Delta)^{-1} \partial_1^2 \rho \end{array}\right) = \left( \begin{array}{c} -\mathcal R_1 \mathcal R_2 \rho \\ \mathcal R_1^2 \rho \end{array}\right)
\]
where
\[
 \mathcal R_k=\partial_k (-\Delta)^{-\frac{1}{2}},\quad k=1,2
\]
are the Riesz operators. Applying $-\mathcal R_1 \mathcal R_2$ resp. $\mathcal R_1^2$ to the first equation in \eqref{porous} gives
\begin{equation}\label{porous2}
 u_t + (u \cdot \nabla) u = \left(\begin{array}{c} \left[u \cdot \nabla,-\mathcal R_1 \mathcal R_2\right] \rho \\ \left[u \cdot \nabla,\mathcal R_1^2\right] \rho
\end{array}\right)
\end{equation}
where we use $[A,B]=AB-BA$ for the commutator of operators. We will express \eqref{porous2} in Lagrangian coordinates, i.e. in terms of the flow map of $u$
\[
 \varphi_t = u \circ \varphi,\quad \varphi(0)=\operatorname{id}
\]
where $\operatorname{id}$ is the identity map in $\R^2$. The functional space for $\varphi$ is for $s > 2$ the diffeomorphism group
\[
 \Ds^s(\R^2):=\{ \varphi:\R^2 \to \R^2 \;|\; \varphi-\operatorname{id} \in H^s(\R^2;\R^2) \mbox{ and } \det(d_x\varphi) > 0 \;\forall x \in \R^2 \}
\]
where $H^s(\R^2;\R^2)$ denotes the vector valued Sobolev space. By the Sobolev imbedding theorem $\Ds^s(\R^2)$ consists of $C^1$ diffeomorphisms. Regarding it as an open subset $\Ds^s(\R^2)-\operatorname{id} \subseteq H^s(\R^2;\R^2)$ it is a connected topological group under composition of maps -- see \cite{composition}. The first equation in \eqref{porous} in terms of $\varphi$ reads as
\[
 \rho(t)=\rho_0 \circ \varphi(t)^{-1}
\]
Taking the $t$ derivative of $\varphi_t = u \circ \varphi$ is
\[
 \varphi_{tt}= (u_t + (u \cdot \nabla) u) \circ \varphi
\]
Thus we can write \eqref{porous2} as
\[
 \varphi_{tt} = \left(\begin{array}{c} \left[(\varphi_t \circ \varphi^{-1}) \cdot \nabla,-\mathcal R_1 \mathcal R_2\right] (\rho_0 \circ \varphi^{-1}) \\ \left[(\varphi_t \circ \varphi^{-1}) \cdot \nabla,\mathcal R_1^2\right] (\rho_0 \circ \varphi^{-1})
\end{array}\right) \circ \varphi
\]
or as a first order equation in $\Ds^s(\R^2) \times H^s(\R^2;\R^2)$
\[
 \frac{d}{dt} \left(\begin{array}{c} \varphi \\ v \end{array}\right) =\left(\begin{array}{c} v \\
\left(\begin{array}{c} \left[(v \circ \varphi^{-1}) \cdot \nabla,-\mathcal R_1 \mathcal R_2\right] (\rho_0 \circ \varphi^{-1}) \\ \left[(v \circ \varphi^{-1}) \cdot \nabla,\mathcal R_1^2\right] (\rho_0 \circ \varphi^{-1})
\end{array}\right) \circ \varphi
\end{array}\right)=\left(\begin{array}{c} v \\ F(\varphi,v,\rho_0) \end{array}\right)
\]
We claim that $F(\varphi,v,\rho_0)$ is analytic in its arguments

\begin{Lemma}\label{lemma_analytic}
The map
\begin{align*}
 F:\Ds^s(\R^2) \times H^s(\R^2;\R^2) \times H^s(\R^2) &\to H^s(\R^2;\R^2)\\
(\varphi,v,\rho_0) &\mapsto F(\varphi,v,\rho_0)
\end{align*}
is analytic.
\end{Lemma}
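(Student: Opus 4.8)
The plan is to write $F$ as a finite combination of maps that are each manifestly analytic and to conclude via the fact that real-analyticity between Banach spaces is preserved under finite sums, compositions, and continuous multilinear operations. Throughout, write $\psi=\varphi^{-1}$, let $R_\varphi$ denote the composition operator $f\mapsto f\circ\varphi$, let $M_a$ denote multiplication by $a$, and let $P$ stand for either $-\mathcal R_1\mathcal R_2$ or $\mathcal R_1^2$. The first step is a purely algebraic reduction of the commutators appearing in $F$. Since $\mathcal R_1,\mathcal R_2$ are Fourier multipliers, they commute with each $\partial_k$, so with $w=v\circ\psi$ one has
\[
 [w\cdot\nabla,P]=\sum_{k=1}^2[M_{w_k},P]\,\partial_k .
\]
Hence every component of $F$ is a finite sum of terms of the form $R_\varphi\,[M_{w_k},P]\,\partial_k\,(\rho_0\circ\psi)$.

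Second, I would isolate the regularity gain. The operator $P\partial_k$ has order $1$, but the commutator $[M_a,P]$ of a multiplication with the order-zero Calder\'on-Zygmund operator $P$ is smoothing of order one: the bilinear map $(a,h)\mapsto[M_a,P]h$ is bounded $H^s(\R^2)\times H^{s-1}(\R^2)\to H^s(\R^2)$ for $s>2$ (a Calder\'on/Kato-Ponce commutator estimate), and a bounded bilinear map is analytic. This is precisely what lands the output of $F$ in $H^s$ rather than in $H^{s-1}$, absorbing the derivative produced by $\partial_k$; moreover $R_\varphi$ and $R_\psi$ are bounded on $H^s(\R^2)$, so the target space in the statement is the correct one.

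Third --- the crux --- I would treat the composition operators. Neither $R_\varphi$ nor the inversion $\varphi\mapsto\psi$ is analytic, or even $C^1$, on $\Ds^s(\R^2)$, so the factors cannot be handled separately and the cancellation inherent in the conjugation must be exploited. Using $R_\varphi M_a=M_{a\circ\varphi}R_\varphi$, the chain rule $\partial_k R_\psi=\sum_l M_{\partial_k\psi_l}R_\psi\partial_l$, the identity $R_\psi R_\varphi=\operatorname{id}$, and the Jacobian relation $(\partial_k\psi_l)\circ\varphi=[(d\varphi)^{-1}]_{lk}$, each sandwiched composition $a\circ\psi$ collapses: the factors $w_k\circ\varphi=v_k$ reappear as components of $v$, and $(\partial_k\psi_l)\circ\varphi$ becomes an analytic function of $d\varphi$ via pointwise matrix inversion, which is analytic on the Banach algebra $H^{s-1}(\R^2)$ since $s-1>1$. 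After these manipulations the only genuinely $\varphi$-dependent operator remaining is the conjugated Riesz operator $\varphi\mapsto R_\varphi P R_\psi$, together with the conjugated derivative $R_\varphi\partial_k R_\psi=\sum_l[(d\varphi)^{-1}]_{lk}\partial_l$, whose coefficients are analytic in $\varphi$ by the same pointwise matrix inversion.

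The main obstacle, then, is to prove that $\varphi\mapsto R_\varphi P R_\psi$ is analytic as a map $\Ds^s(\R^2)\to\mathcal L(H^{s-1}(\R^2))$. I would establish this by complexifying and verifying holomorphy through local boundedness together with weak (G\^ateaux) holomorphy, the necessary kernel bounds being furnished by the Calder\'on-Zygmund structure of $P$; equivalently, one checks Cauchy estimates for the multilinear Taylor coefficients of the conjugation, which are the iterated commutators $[\cdots[P,X_1],\dots,X_m]$ of $P$ with the vector fields $X_l$ determined by the variation $\delta\varphi$, each of which remains bounded on $H^{s-1}(\R^2)$ with factorially controlled norm. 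It is essential here that the conjugation carries an inner $\psi$ and an outer $\varphi$: one-sided composition with a smoothing operator would yield only finite differentiability, whereas the two-sided commutator structure gives boundedness of every order and hence analyticity. This pseudodifferential/commutator analysis, parallel to the conjugation lemmas of the Ebin-Marsden framework and the composition results of \cite{composition}, is where I expect all the real difficulty to lie. Granting it, each component of $F$ is a finite sum of compositions of the analytic commutator map, the analytically $\varphi$-dependent conjugated Riesz and derivative operators, multiplication by analytic functions of $v$, $\rho_0$ and $d\varphi$, and the bounded linear maps $\partial_k:H^s\to H^{s-1}$; stability of analyticity under these operations yields the claim.
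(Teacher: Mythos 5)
Your algebraic reduction $[w\cdot\nabla,P]=\sum_k[M_{w_k},P]\partial_k$ is correct, and you rightly identify the conjugation by $\varphi$ as the crux, but the key lemma you propose to prove does not close the argument. After you collapse the compositions as described, the term $R_\varphi[M_{v_k\circ\varphi^{-1}},P]\partial_kR_\varphi^{-1}\rho_0$ becomes $[M_{v_k},\,R_\varphi PR_\varphi^{-1}]\,h_k$ with $h_k=\sum_l[(d\varphi)^{-1}]_{lk}\partial_l\rho_0$, and $h_k$ lies only in $H^{s-1}(\R^2)$. Analyticity of $\varphi\mapsto R_\varphi PR_\varphi^{-1}$ with values in $\mathcal L(H^{s-1})$ therefore only yields $F$ with values in $H^{s-1}(\R^2;\R^2)$: each of the two summands $M_{v_k}(R_\varphi PR_\varphi^{-1})h_k$ and $(R_\varphi PR_\varphi^{-1})(M_{v_k}h_k)$ individually lives in $H^{s-1}$, and the one-derivative gain that puts $F$ in $H^s$ is a property of their \emph{difference}, invisible at the level of the operator norm on $H^{s-1}$. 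You state the two needed ingredients separately --- the unconjugated commutator estimate $H^s\times H^{s-1}\to H^s$ in your second paragraph, and the conjugation analyticity without gain in your fourth --- but you cannot apply the unconjugated commutator map directly (its multiplier argument would be $v_k\circ\varphi^{-1}$, which is not even a $C^1$ function of $\varphi$), and once you conjugate, what is required is their combination: analyticity of $(\varphi,a)\mapsto[M_a,R_\varphi PR_\varphi^{-1}]$ with values in $\mathcal L(H^{s-1},H^s)$, equivalently of $(\varphi,w,f)\mapsto R_\varphi[(w\circ\varphi^{-1})\cdot\nabla,\mathcal R_j\mathcal R_k]R_\varphi^{-1}f$ with values in $H^s$. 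This is strictly stronger than your stated obstacle: every Taylor coefficient of the conjugation must be shown to be an order-zero operator whose commutator with a multiplication still gains one derivative, with factorial control. Your sketch never states this combined claim, let alone proves it.

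For comparison, the paper's proof is purely algebraic modulo its reference \cite{sqg}: it quotes from there the analyticity of $(\varphi,f)\mapsto R_\varphi\mathcal R_kR_\varphi^{-1}f$ \emph{and} of the conjugated commutator map $(\varphi,w,f)\mapsto R_\varphi[(w\circ\varphi^{-1})\cdot\nabla,\mathcal R_k]R_\varphi^{-1}f$ into $H^s$ --- the latter being exactly the statement missing from your plan --- and then reduces the products $\mathcal R_j\mathcal R_k$ to single Riesz transforms via $[A,BC]=[A,B]C+B[A,C]$ together with insertion of $R_\varphi^{-1}R_\varphi=\operatorname{id}$. If your intention is to reprove the \cite{sqg} input from scratch, the iterated-commutator/Cauchy-estimate strategy is the right kind of idea, but it must be carried out for the conjugated commutator map with the derivative gain, not merely for $\varphi\mapsto R_\varphi PR_\varphi^{-1}\in\mathcal L(H^{s-1})$.
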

In the following we will use the notation $R_\varphi:g \mapsto g \circ \varphi$ for the composition from the right. Note that $R_\varphi^{-1}g=g \circ \varphi^{-1}$.
\begin{proof}[Proof of Lemma \ref{lemma_analytic}]
In \cite{sqg} it was shown that for $k=1,2$ 
\[
 \Ds^s(\R^2) \times H^s(\R^2) \to H^s(\R^2),\quad (\varphi,f) \mapsto \left(\mathcal R_k(f\circ \varphi^{-1})\right) \circ \varphi = R_\varphi \mathcal R_k R_\varphi^{-1} f
\]
is analytic and also that
\begin{align*}
 &\Ds^s(\R^2) \times H^s(\R^2;\R^2) \times H^s(\R^2) \to H^s(\R^2)\\
&(\varphi,w,f) \mapsto \left([(w \circ \varphi^{-1})\cdot \nabla,\mathcal R_k](f \circ \varphi^{-1})\right) \circ \varphi=R_\varphi [R_\varphi^{-1}w \cdot \nabla,\mathcal R_k]R_\varphi^{-1} f
\end{align*}
is analytic. Using these results we see by writing for $j,k=1,2$
\begin{align*}
 &R_\varphi [R_\varphi^{-1}v \cdot \nabla, \mathcal R_j \mathcal R_k]R_\varphi^{-1}\rho_0= \\
&R_\varphi [R_\varphi^{-1} v \cdot \nabla,\mathcal R_j] \mathcal R_k R_\varphi^{-1} \rho_0 +
R_\varphi \mathcal R_j [R_\varphi^{-1} v \cdot \nabla,\mathcal R_k] R_\varphi^{-1} \rho_0=\\
&R_\varphi [R_\varphi^{-1} v \cdot \nabla,\mathcal R_j]R_\varphi^{-1} R_\varphi \mathcal R_k R_\varphi^{-1} \rho_0 +
R_\varphi \mathcal R_j R_\varphi^{-1} R_\varphi [R_\varphi^{-1} v \cdot \nabla,\mathcal R_k] R_\varphi^{-1} \rho_0
\end{align*}
that $F(\varphi,v,\rho_0)$ is analytic in its arguments.
\end{proof}
By Picard-Lindel\"of we get for any $\rho_0 \in H^s(\R^2)$ local solutions to
\begin{equation}\label{ode}
 \frac{d}{dt} \left(\begin{array}{c} \varphi \\ v \end{array}\right) =\left( \begin{array}{c} v \\ F(\varphi,v,\rho_0) \end{array}\right),\quad \varphi(0)=\operatorname{id}, v(0)=u_0
\end{equation}
By taking $u_0=(-\mathcal R_1 \mathcal R_2 \rho_0,\mathcal R_1^2 \rho_0)$ we claim that we get solutions to \eqref{porous}.

\begin{Prop}\label{prop_solution}
Let $\rho_0 \in H^s(\R^2)$ and let $(\varphi,v)$ be the solution to \eqref{ode} with $u_0=(-\mathcal R_1 \mathcal R_2 \rho_0,\mathcal R_1^2 \rho_0)$ on some time interval $[0,T]$ with $T > 0$. Then 
\[
 u(t)=v(t) \circ \varphi(t)^{-1} \quad \mbox{and} \quad \rho(t)=\rho_0 \circ \varphi(t)^{-1}
\]
is a solution $(\rho,u) \in C^0([0,T];H^s(\R^2) \times H^s(\R^2;\R^2))$ to \eqref{porous}.
\end{Prop}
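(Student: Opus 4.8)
The plan is to verify the three equations of \eqref{porous} one at a time, exploiting that \eqref{ode} was built precisely as the Lagrangian form of the differentiated system \eqref{porous2}, and then to recover the algebraic constraints of \eqref{porous} (incompressibility and Darcy's law) by showing that they are propagated in time from the initial data. First I would settle well-definedness and regularity: since $\rho_0\in H^s(\R^2)$ and $t\mapsto(\varphi(t),v(t))$ is continuous into $\Ds^s(\R^2)\times H^s(\R^2;\R^2)$ by \eqref{ode}, continuity of inversion on $\Ds^s(\R^2)$ and of composition $H^s\times\Ds^s(\R^2)\to H^s$ (see \cite{composition}) shows that $\rho=\rho_0\circ\varphi^{-1}$ and $u=v\circ\varphi^{-1}$ are curves in $C^0([0,T];H^s)$. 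One must keep in mind the usual loss of one derivative in time: $\rho$ and $u$ are only $C^1$ in $t$ with values in $H^{s-1}$, so the evolution equations will be checked as identities in $H^{s-1}$.

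Next I would establish the flow relation and the transport equation. From the first line of \eqref{ode}, $\varphi_t=v$, so $u\circ\varphi=(v\circ\varphi^{-1})\circ\varphi=v=\varphi_t$, i.e.\ $\varphi$ is the flow of $u$. Since $\rho\circ\varphi=\rho_0$ is constant in $t$, differentiating in $t$ (in $H^{s-1}$) and using $\varphi_t=u\circ\varphi$ gives $(\rho_t+(u\cdot\nabla)\rho)\circ\varphi=0$; precomposing with $\varphi^{-1}$ yields the first equation $\rho_t+(u\cdot\nabla)\rho=0$, together with $\rho(0)=\rho_0$.

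I would then recover the Eulerian momentum equation. Differentiating $\varphi_t=u\circ\varphi$ once more and using $\varphi_{tt}=v_t=F(\varphi,v,\rho_0)$, I substitute $v\circ\varphi^{-1}=u$ and $\rho_0\circ\varphi^{-1}=\rho$ into the definition of $F$ and precompose with $\varphi^{-1}$, which reproduces exactly \eqref{porous2}. The remaining and, I expect, central step is to show that the Darcy relation $u=(-\mathcal R_1\mathcal R_2\rho,\mathcal R_1^2\rho)$ persists in time. Writing $a=u_1+\mathcal R_1\mathcal R_2\rho$ and $b=u_2-\mathcal R_1^2\rho$, I would combine \eqref{porous2} with the transport equation from the previous step and the boundedness of the zeroth-order operators $\mathcal R_1\mathcal R_2$ and $\mathcal R_1^2$ on $H^s$ to derive the linear transport equations $a_t+(u\cdot\nabla)a=0$ and $b_t+(u\cdot\nabla)b=0$; here the commutator structure of the right-hand side of \eqref{porous2} is exactly what makes the Riesz terms cancel. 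Since $a(0)=b(0)=0$ by the choice $u_0=(-\mathcal R_1\mathcal R_2\rho_0,\mathcal R_1^2\rho_0)$, an $L^2$ energy estimate and Gronwall's inequality (needing only $\|\operatorname{div}u\|_{L^\infty}<\infty$, which holds since $u\in H^s\hookrightarrow C^1$) force $a\equiv b\equiv 0$.

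Finally, with $u=(-\mathcal R_1\mathcal R_2\rho,\mathcal R_1^2\rho)=(-(-\Delta)^{-1}\partial_1\partial_2\rho,(-\Delta)^{-1}\partial_1^2\rho)$ in hand, the last two equations of \eqref{porous} follow by direct computation: $\operatorname{div}u=-(-\Delta)^{-1}\partial_1^2\partial_2\rho+(-\Delta)^{-1}\partial_1^2\partial_2\rho=0$, and setting $p=(-\Delta)^{-1}\partial_2\rho$ gives $-\nabla p-\binom{0}{\rho}=u$ after using $(-\Delta)^{-1}\partial_2^2=-\operatorname{id}-(-\Delta)^{-1}\partial_1^2$. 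Together with the transport equation and $\rho(0)=\rho_0$, this shows that $(\rho,u)$ solves \eqref{porous}. I expect the main obstacle to lie in the propagation step for $a$ and $b$, together with the careful bookkeeping of the one-derivative loss in time, rather than in any single hard estimate.
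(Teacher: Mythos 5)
Your proposal is correct, but it takes a genuinely different route from the paper. The paper never passes to Eulerian coordinates at all: it defines the Lagrangian pullback of the Darcy velocity, $w=(-R_\varphi\mathcal R_1\mathcal R_2R_\varphi^{-1}\rho_0,\,R_\varphi\mathcal R_1^2R_\varphi^{-1}\rho_0)$, computes $\frac{d}{dt}w$ directly (the chain rule applied to $R_\varphi^{-1}\rho_0$ produces exactly the commutator $R_\varphi[\mathcal R_1\mathcal R_2,(R_\varphi^{-1}v\cdot\nabla)]R_\varphi^{-1}\rho_0=F_1(\varphi,v,\rho_0)$), and observes that $w$ and $v$ therefore have the same time derivative and the same initial value, hence coincide; Darcy's law, incompressibility and the transport equation then all follow from $u=w\circ\varphi^{-1}=(-\mathcal R_1\mathcal R_2\rho,\mathcal R_1^2\rho)$. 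Your version instead derives \eqref{porous2} in Eulerian variables and propagates the constraint by showing that the Darcy defect $a=u_1+\mathcal R_1\mathcal R_2\rho$, $b=u_2-\mathcal R_1^2\rho$ satisfies a homogeneous transport equation, killed by an $L^2$ Gronwall argument. The cancellation you rely on is real (indeed $a_t+(u\cdot\nabla)a=[u\cdot\nabla,-\mathcal R_1\mathcal R_2]\rho-[u\cdot\nabla,-\mathcal R_1\mathcal R_2]\rho=0$), and your final verification of the divergence-free and pressure conditions matches the algebra in the introduction, so the argument closes. What the paper's route buys is that the whole identity lives in $C^1([0,T];H^s)$ as an ODE statement, so there is no loss of a derivative in time and no PDE energy estimate; what your route buys is the more classical and transparent ``propagation of constraints'' picture, at the price of the $H^{s-1}$ bookkeeping for $u_t$, $\rho_t$ and the justification of $\frac{d}{dt}(u\circ\varphi)=(u_t+(u\cdot\nabla)u)\circ\varphi$, which you correctly flag as the places needing care.
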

\begin{proof}
By the properties of the composition we clearly have
\[
 (\rho,u) \in C^0([0,T];H^s(\R^2) \times H^s(\R^2;\R^2))
\]
Define
\[
 w=\left(\begin{array}{c}w_1 \\ w_2 \end{array}\right)=\left(\begin{array}{c} -R_\varphi \mathcal R_1 \mathcal R_2 R_\varphi^{-1} \rho_0 \\ R_\varphi \mathcal R_1^2 R_\varphi^{-1} \rho_0 \end{array}\right) \in C^\infty([0,T];H^s(\R^2;\R^2))
\]
Calculating the $t$ derivative of $w_1$ gives (note that by the Sobolev imbedding we have $C^1$ expressions)
\begin{eqnarray*}
 \frac{d}{dt} w_1 &=& -\frac{d}{dt} R_\varphi \mathcal R_1 \mathcal R_2 R_\varphi^{-1} \rho_0 = -R_\varphi \mathcal R_1 \mathcal R_2 \frac{d}{dt} R_\varphi^{-1} \rho_0 - (\varphi_t \cdot R_\varphi \nabla) \mathcal R_1 \mathcal R_2 R_\varphi^{-1} \rho_0\\
&=&R_\varphi \mathcal R_1 \mathcal R_2 (R_\varphi^{-1} d \rho_0 \cdot R_\varphi^{-1}[d\varphi]^{-1} \cdot R_\varphi^{-1}\varphi_t) -(\varphi_t \cdot R_\varphi \nabla) \mathcal R_1 \mathcal R_2 R_\varphi^{-1} \rho_0\\
&=& R_\varphi \mathcal R_1 \mathcal R_2 (R_\varphi^{-1}\varphi_t \cdot \nabla) R_\varphi^{-1}\rho_0  -(\varphi_t \cdot R_\varphi \nabla) \mathcal R_1 \mathcal R_2 R_\varphi^{-1} \rho_0\\
&=& R_\varphi [\mathcal R_1 \mathcal R_2,(R_\varphi^{-1}v \cdot \nabla)] R_\varphi^{-1}\rho_0 \\
\end{eqnarray*}
showing that $w_1(t)=v_1(t)$ for $t \in [0,T]$ as $w_1(0)=v_1(0)=-\mathcal R_1 \mathcal R_2\rho_0$. Similarly we have $w_2=v_2$. Thus
\[
 u=w \circ \varphi^{-1} = \left(\begin{array}{c} -\mathcal R_1 \mathcal R_2 \rho\\ \mathcal R_1^2 \rho \end{array}\right)
\]
showing the claim.
\end{proof}
On the other hand consider a solution of \eqref{porous} in
\[
 (\rho,u) \in C^0([0,T];H^s(\R^2) \times H^s(\R^2;\R^2)) \cap C^1([0,T];H^{s-1}(\R^2) \times H^{s-1}(\R^2;\R^2))
\]
We know (see \cite{lagrangian}) that there exists a unique $\varphi \in C^1([0,T];\Ds^s(\R^2))$ with 
\[
 \varphi_t = u \circ \varphi,\quad \varphi(0)=\operatorname{id}
\]
We claim that $\varphi$ and $v=\varphi_t$ is a solution to \eqref{ode}. From \eqref{porous} we get
\[
 u=\left(\begin{array}{c} -\mathcal R_1 \mathcal R_2 (\rho_0 \circ \varphi^{-1}) \\ \mathcal R_1^2(\rho_0 \circ \varphi^{-1}) \end{array}\right)
\] 
Consider $u \circ \varphi$. Taking the $t$ derivative we get pointwise
\[
 (u_t + (u \cdot \nabla) u) \circ \varphi = \left(\begin{array}{c} \left[u \cdot \nabla,-\mathcal R_1 \mathcal R_2\right] (\rho_0 \circ \varphi^{-1}) \\ \left[u \cdot \nabla,\mathcal R_1^2\right] (\rho_0 \circ \varphi^{-1})
\end{array}\right) \circ \varphi
\]
with the righthandside continuous with values in $H^s(\R^2;\R^2)$. Hence $v \in C^1([0,T];H^s(\R^2;\R^2))$ with derivative
\[
 v_t = \left(\begin{array}{c} \left[(v\circ \varphi^{-1}) \cdot \nabla,-\mathcal R_1 \mathcal R_2\right] (\rho_0 \circ \varphi^{-1}) \\ \left[(v\circ \varphi^{-1}) \cdot \nabla,\mathcal R_1^2\right] (\rho_0 \circ \varphi^{-1})
\end{array}\right) \circ \varphi
\]
Hence $(\varphi,v)$ is a solution to \eqref{ode} showing uniqueness of solutions for \eqref{porous} by the uniqueness of solutions for ODEs. Together with Proposition \ref{prop_solution} we get therefore the local well-posedness of \eqref{porous}. 

From the ODE formulation \eqref{ode} we immediately get
\begin{Th}\label{th_analytic}
The particle trajectories of the flow determined by \eqref{porous} are analytic curves in $\R^2$.
\end{Th}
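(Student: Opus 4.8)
The plan is to upgrade the regularity in $t$ of the flow map $\varphi$ from $C^1$ to real-analytic, and then to evaluate at a fixed point. By Lemma \ref{lemma_analytic}, for a fixed $\rho_0 \in H^s(\R^2)$ the right-hand side of the ODE \eqref{ode}, i.e. the vector field
\[
 (\varphi,v) \mapsto \left(v, F(\varphi,v,\rho_0)\right),
\]
is a real-analytic map on the open set $\Ds^s(\R^2) \times H^s(\R^2;\R^2) \subseteq H^s(\R^2;\R^2) \times H^s(\R^2;\R^2)$ (the first component is linear, hence analytic, and the second is analytic by the Lemma). Since this vector field is autonomous, the problem reduces to the classical statement that the integral curves of a real-analytic vector field on an open subset of a Banach space are themselves real-analytic in time.

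First I would establish this abstract analyticity-in-time result. The cleanest route is complexification: a real-analytic map between real Banach spaces extends holomorphically to a neighbourhood in the complexified spaces. The vector field above therefore extends to a holomorphic map, and one solves the ODE for complex time $t$ in a small disc about $0$ by a holomorphic Picard–Lindel\"of contraction argument in a Banach space of holomorphic curves $t \mapsto (\varphi(t),v(t))$. The restriction of the resulting holomorphic solution to real $t$ is real-analytic and, by uniqueness of ODE solutions, coincides with the solution $(\varphi,v)$ of \eqref{ode}. Hence $t \mapsto \varphi(t)$ is a real-analytic curve in the affine space $\operatorname{id} + H^s(\R^2;\R^2)$.

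It then remains to pass from the curve $t \mapsto \varphi(t)$ to the scalar particle trajectory $t \mapsto \varphi(t)(x)$ for fixed $x \in \R^2$. Because $s > 2 > 1$, the Sobolev imbedding theorem makes the point evaluation
\[
 \operatorname{ev}_x: H^s(\R^2;\R^2) \to \R^2, \quad f \mapsto f(x)
\]
a bounded linear, hence real-analytic, map. Writing $\varphi(t)(x) = x + \operatorname{ev}_x\!\left(\varphi(t) - \operatorname{id}\right)$ exhibits the trajectory as the composition of the real-analytic curve $t \mapsto \varphi(t) - \operatorname{id}$ with the affine analytic map $\operatorname{ev}_x(\cdot) + x$. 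Since a composition of real-analytic maps is real-analytic, each particle trajectory is an analytic curve in $\R^2$.

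The main obstacle is the abstract analyticity-in-time step: establishing that the flow of a real-analytic vector field on an open subset of a Banach space is real-analytic in time. Concretely, one must verify that the holomorphic Picard–Lindel\"of iteration converges in a suitable Banach space of holomorphic, Banach-space-valued curves on a small complex disc. Analyticity on all of $[0,T]$ then follows by restarting the autonomous flow at each $t_0 \in [0,T]$, which is legitimate because $(\varphi(t_0),v(t_0))$ remains in the domain $\Ds^s(\R^2) \times H^s(\R^2;\R^2)$ on which the vector field is analytic. Everything after that—the passage to the scalar trajectory through the evaluation functional—is routine given the Sobolev imbedding.
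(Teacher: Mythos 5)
Your proposal is correct and follows essentially the same route as the paper: the paper likewise invokes the analyticity of the vector field in \eqref{ode} (Lemma \ref{lemma_analytic}) to conclude by ODE theory that $t \mapsto \varphi(t)$ is an analytic curve in $\Ds^s(\R^2)$, and then composes with the (bounded linear) evaluation at $x$ furnished by the Sobolev imbedding. The only difference is that you spell out the standard complexification/holomorphic Picard--Lindel\"of argument behind the ``ODE theory'' step, which the paper takes as known.
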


\begin{proof}
As \eqref{ode} is analytic we get by ODE theory that
\[
[0,T] \to D^s(\R^2),\quad t \mapsto \varphi(t)
\]
is analytic. Thus evaluation at $x \in \R^2$, giving the trajectory of the particle which is located at $x$ at time zero,
\[
 [0,T] \to \R^2,\quad t \mapsto \varphi(t,x)
\]
is also analytic. 
\end{proof}

\section{Nonuniform dependence}\label{section_nonuniform}

The goal of this section is to prove Theorem \ref{th_nonuniform}. For the proof we need some preparation. Note that we have the following scaling property for \eqref{porous}: Assume that $(\rho(t,x),u(t,x))$ is a solution to \eqref{porous}. Then a simple calculation shows that for $\lambda > 0$
\[
 (\rho_\lambda(t,x),u_\lambda(t,x))=(\lambda\rho(\lambda t,x),\lambda u(\lambda t,x))
\] 
is also a solution to \eqref{porous}. Thus we have for the domain $U_T \subseteq H^s(\R^2)$
\[
 U_{\lambda \cdot T} = \frac{1}{\lambda} \cdot U_T 
\]
and for the solution map $\Phi_T$
\[
 \Phi_{\lambda \cdot T}(\rho_0)=\frac{1}{\lambda} \cdot \Phi_T(\lambda \cdot \rho_0)
\]
Hence we have $\Phi_T(\rho_0)=\frac{1}{T} \cdot \Phi_1(T \cdot \rho_0)$. Therefore it will be enough to prove Theorem \ref{th_nonuniform} for the case $T=1$. For $T=1$ we introduce 
\[
 U:=U_1 \quad \mbox{and} \quad \Phi:=\Phi_1
\]
Similarly we denote by
\[
 \Psi(\rho_0):=\varphi(1;\rho_0)
\]
where $\varphi(1;\rho_0)$ is the value of the $\varphi$ component at time 1 of the solution to \eqref{ode} for the initial values
\[
 \varphi(0)=\operatorname{id},\quad v(0)=(-\mathcal R_1 \mathcal R_2 \rho_0,\mathcal R_1^2 \rho_0)
\]
Thus by analytic dependence on initial values and parameters we have that
\[
 \Psi:U \subseteq H^s(\R^2) \to \Ds^s(\R^2),\quad \rho_0 \mapsto \Psi(\rho_0)
\]
is analytic. For the proof of the main result we will need the following technical lemma

\begin{Lemma}\label{lemma_dense}
There is a dense subset $S \subseteq U (\subseteq H^s(\R^2))$ with the property that for each $\rho_\bullet \in S$ we have: the support of $\rho_\bullet$ is compact and there is $\bar \rho \in H^s(\R^2)$ and $x^\ast \in \R^2$ with $\operatorname{dist}(x^\ast,\operatorname{supp}\rho_\bullet) > 2$ (i.e. the distance of $x^\ast$ to the support of $\rho_\bullet$ is bigger than 2) and
\[
 \left(d_{\rho_\bullet} \Psi(\bar \rho)\right)(x^\ast) \neq 0
\] 
where $d_{\rho_\bullet} \Psi$ is the differential of $\Psi$ at $\rho_\bullet$.
\end{Lemma}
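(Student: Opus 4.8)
The plan is to prove Lemma~\ref{lemma_dense} by constructing the dense set $S$ explicitly and, for each of its elements, exhibiting a suitable perturbation direction $\bar\rho$ and evaluation point $x^\ast$ at which the differential of $\Psi$ does not vanish. The natural candidate for $S$ is the set of compactly supported smooth (or $H^s$) densities $\rho_\bullet$ lying in the domain $U$ of the time-$1$ solution map; since $C_c^\infty(\R^2) \cap U$ is dense in $U$ (indeed $U$ is open in $H^s(\R^2)$ and compactly supported functions are dense in $H^s$), one only needs to arrange the remaining two conditions for each such $\rho_\bullet$. Because $\rho_\bullet$ has compact support, we may pick $x^\ast$ far away, say with $\operatorname{dist}(x^\ast,\operatorname{supp}\rho_\bullet)>2$, which is possible for any compactly supported density.

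The heart of the matter is to show that the differential $d_{\rho_\bullet}\Psi$, evaluated at $x^\ast$, is a nonzero linear functional in the direction variable, i.e.\ that there exists some $\bar\rho\in H^s(\R^2)$ with $\bigl(d_{\rho_\bullet}\Psi(\bar\rho)\bigr)(x^\ast)\neq 0$. First I would differentiate the flow equation \eqref{ode} with respect to the initial density $\rho_0$ to obtain the variational (linearized) equation governing $d_{\rho_\bullet}\Psi$. Since $\Psi$ is analytic by the discussion following Theorem~\ref{th_analytic}, this variational equation is a linear ODE in $\Ds^s(\R^2)\times H^s(\R^2;\R^2)$ with coefficients depending on the base trajectory $(\varphi,v)$ associated to $\rho_\bullet$, driven by an inhomogeneous term coming from the $\rho_0$-dependence of $F$. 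Evaluating at the point $x^\ast$ and reading off the leading-order (in $t$) contribution, the Taylor expansion of $t \mapsto \bigl(d_{\rho_\bullet}\Psi(\bar\rho)\bigr)$ at $t=0$ shows that the first nontrivial coefficient is governed by the initial velocity perturbation $d_{\rho_\bullet}u_0(\bar\rho)=(-\mathcal R_1\mathcal R_2\bar\rho,\ \mathcal R_1^2\bar\rho)$ evaluated near $x^\ast$.

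The key analytic input is the \emph{nonlocality} of the Riesz operators: even if $\bar\rho$ is supported in a ball disjoint from $x^\ast$, the components $\mathcal R_1\mathcal R_2\bar\rho$ and $\mathcal R_1^2\bar\rho$ do not vanish at $x^\ast$ but decay like $|x^\ast|^{-2}$ with nonzero leading coefficient. Concretely I would choose $\bar\rho=\rho_\bullet$ (or any fixed bump supported near $\operatorname{supp}\rho_\bullet$) and use the explicit singular-integral kernels of $\mathcal R_1\mathcal R_2$ and $\mathcal R_1^2$ to compute that at least one component of $d_{\rho_\bullet}u_0(\bar\rho)$ is nonzero at $x^\ast$; since to first order in $t$ we have $\varphi(t)\approx \operatorname{id}+t\,u_0$, this makes the corresponding component of $d_{\rho_\bullet}\Psi(\bar\rho)$ nonzero at $x^\ast$ for the time-$1$ map, after verifying that higher-order corrections do not conspire to cancel it. The separation condition $\operatorname{dist}(x^\ast,\operatorname{supp}\rho_\bullet)>2$ guarantees both that $x^\ast$ lies in the region where the kernel asymptotics are clean and that $\rho_\bullet$ itself vanishes near $x^\ast$, which is what makes the perturbation effect detectable purely through the nonlocal velocity.

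The main obstacle I anticipate is ensuring the nonvanishing survives the full nonlinear flow up to time $1$, rather than just to first order in $t$: a priori the leading $O(t)$ term produced by the Riesz kernels could be cancelled by later-order terms in the analytic expansion of $\Psi$. I would handle this by choosing $\bar\rho$ so that the derivative of $\Psi$ in that direction is genuinely nonzero --- for instance by invoking that $d_{\rho_\bullet}\Psi$ is a bounded linear map into $H^s(\R^2;\R^2)\subseteq C^1$, so pointwise evaluation at $x^\ast$ is a continuous functional, and arguing that it cannot annihilate \emph{all} directions $\bar\rho$ without $\Psi$ being locally constant in a neighborhood of $x^\ast$, which is incompatible with the explicit nonlocal transport of mass generated by the Riesz operators. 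If a clean closed-form computation to time $1$ is intractable, an alternative is to argue by contradiction: if $\bigl(d_{\rho_\bullet}\Psi(\bar\rho)\bigr)(x^\ast)=0$ for all $\bar\rho$, then differentiating the identity $\varphi_t=u\circ\varphi$ forces the linearized velocity to vanish at the trajectory through $x^\ast$, which the Riesz-kernel computation rules out.
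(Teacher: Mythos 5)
Your proposal correctly identifies the two essential ingredients — that the linearization of $\Psi$ at the zero density is the Riesz-operator velocity map $\bar\rho\mapsto(-\mathcal R_1\mathcal R_2\bar\rho,\mathcal R_1^2\bar\rho)$, and that the nonlocality of these operators makes this nonzero at a point $x^\ast$ far from the supports. But there is a genuine gap at exactly the step you flag as the "main obstacle": you want $S$ to be \emph{all} of $C_c^\infty(\R^2)\cap U$, and for an arbitrary such $\rho_\bullet$ you have no argument that $\bigl(d_{\rho_\bullet}\Psi(\bar\rho)\bigr)(x^\ast)\neq 0$ — the first-order-in-$t$ computation lives at the base point $0$, not at $\rho_\bullet$, and your two proposed repairs do not work. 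The claim that $\bigl(d_{\rho_\bullet}\Psi(\bar\rho)\bigr)(x^\ast)=0$ for all $\bar\rho$ would force $\Psi$ to be "locally constant near $x^\ast$" is a non sequitur: it is a statement about the derivative at the single point $\rho_\bullet$, evaluated at the single point $x^\ast$, and implies nothing about constancy. Likewise, vanishing of the time-$1$ value of the linearized flow at one spatial point does not force the linearized velocity to vanish along the trajectory, so the contradiction argument via $\varphi_t=u\circ\varphi$ does not close either.

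The paper sidesteps this entirely by proving a weaker statement that is still sufficient: $S$ need only be \emph{dense}, not all of $C_c^\infty\cap U$. The mechanism is the scaling identity $\Psi(t\bar\rho)=\varphi(1;t\bar\rho)=\varphi(t;\bar\rho)$, which gives the exact formula $d_0\Psi(\bar\rho)=(-\mathcal R_1\mathcal R_2\bar\rho,\mathcal R_1^2\bar\rho)$ at the base point $0$ (this is the rigorous version of your "first order in $t$" heuristic). One then translates $\bar\rho$ so that $(-\mathcal R_1\mathcal R_2\bar\rho)(x^\ast)\neq 0$, and considers the scalar analytic curve $t\mapsto\bigl(d_{t\rho_\bullet}\Psi(\bar\rho)\bigr)(x^\ast)$, which is nonzero at $t=0$; by analyticity its zeros in $[0,1]$ are isolated, so there exist $t_n\uparrow 1$ at which it is nonzero, and the points $t_n\rho_\bullet$ are placed into $S$. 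Density of $S$ follows because $t_n\rho_\bullet\to\rho_\bullet$ and compactly supported densities are dense in $U$. This analytic-continuation-along-rays argument is the missing idea in your proposal: it converts the computable nonvanishing at $t=0$ into nonvanishing at base points arbitrarily close to the given $\rho_\bullet$, without ever having to control cancellations in the full nonlinear flow at $\rho_\bullet$ itself.
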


\begin{proof}
First note that 
\[
 \Psi(t \cdot \bar \rho)=\varphi(1;t \cdot \bar \rho)=\varphi(t;\bar \rho)
\]
where the last equality follows from the scaling property discussed above. Taking the $t$ derivative at $t=0$ gives
\[
 d_0 \Psi(\bar \rho) = \left. \frac{d}{dt} \right|_{t=0} \Psi(t \cdot \bar \rho) = \left. \frac{d}{dt} \right|_{t=0} \varphi(t;\bar \rho) = \varphi_t(0,\bar \rho)=(-\mathcal R_1 \mathcal R_2 \bar \rho,\mathcal R_1^2 \bar \rho)
\]
Fix an arbitrary $\rho_\bullet \in U \subseteq H^s(\R^2)$ with compact support. Take $x^\ast \in \R^2$ with $\operatorname{dist}(x^\ast,\operatorname{supp}(\rho_\bullet)) > 2$. Take a $\tilde \rho \in H^s(\R^2)$ with $-\mathcal R_1 \mathcal R_2 \tilde \rho \neq 0$. The operator $-\mathcal R_1 \mathcal R_2$ is translation invariant as it is a Fourier multiplier operator. Therefore we can choose $\bar \rho(\cdot)=\tilde \rho( \cdot + \delta x)$ with
\[
 (-\mathcal R_1 \mathcal R_2 \bar \rho)(x^\ast) \neq 0
\]
Now consider the analytic curve
\[
 \left(d_{t \rho_\bullet}\Psi(\bar \rho)\right)(x^\ast)
\]
which at $t=0$ is different from zero. Therefore there exist $t_n \uparrow 1$ for $n \geq 1$ with
\[
 \left(d_{t_n \rho_\bullet}\Psi(\bar \rho)\right)(x^\ast) \neq 0
\]
So we can put all these $t_n \rho_\bullet$ into $S$. By this construction we see that $S$ is dense in $U$.
\end{proof}

Theorem \ref{th_nonuniform} will follow from

\begin{Prop}\label{prop_nonuniform}
The time $T=1$ solution map
\[
 \Phi:U \subseteq H^s(\R^2) \to H^s(\R^2),\quad \rho_0 \mapsto \Phi(\rho_0)
\]
is nowhere locally uniformly continuous.
\end{Prop}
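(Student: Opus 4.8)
The plan is to prove the statement directly, reducing it to a local construction via the density of the set $S$. Since $S$ from Lemma \ref{lemma_dense} is dense in $U$, every nonempty open $V \subseteq U$ contains a ball $B(\rho_\bullet,r)$ centered at some $\rho_\bullet \in S$, so it suffices to show that for each $\rho_\bullet \in S$ and each sufficiently small $r>0$ the map $\Phi$ fails to be uniformly continuous on $B(\rho_\bullet,r)$: I will produce sequences $\rho_n,\tilde\rho_n \in B(\rho_\bullet,r)$ with $\|\rho_n-\tilde\rho_n\|_{H^s}\to 0$ but $\|\Phi(\rho_n)-\Phi(\tilde\rho_n)\|_{H^s}\ge c_0>0$. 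Throughout I use the identity $\Phi(\rho_0)=\rho_0\circ\Psi(\rho_0)^{-1}$ from Proposition \ref{prop_solution}, that $U$ is open (lower semicontinuity of the existence time), and that $\Psi$ is analytic, so that on a small enough ball its first and second derivatives are bounded by Cauchy estimates.

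Fix $\rho_\bullet\in S$ with associated $\bar\rho$ and $x^\ast$, and set $v^\ast:=\big(d_{\rho_\bullet}\Psi(\bar\rho)\big)(x^\ast)\neq 0$, $\omega:=v^\ast/|v^\ast|$. Choose $\chi\in C_c^\infty(\R^2)$ supported in a ball of radius $<1$ about $x^\ast$, and define the common high-frequency packet $g_n(x):=\alpha\,n^{-s}\chi(x)\cos(n\,\omega\cdot x)$, with $\alpha$ fixed so that $\|g_n\|_{H^s}\to\epsilon$ for a small $\epsilon<r$; then $\|g_n\|_{H^{s-1}}\to 0$. The two sequences are
\[
 \rho_n:=\rho_\bullet+g_n,\qquad \tilde\rho_n:=\rho_\bullet+\tfrac1n\bar\rho+g_n .
\]
For large $n$ both lie in $B(\rho_\bullet,r)\subseteq U$, and $\|\rho_n-\tilde\rho_n\|_{H^s}=\tfrac1n\|\bar\rho\|_{H^s}\to 0$, so the inputs converge and everything rests on a lower bound for the outputs. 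Writing $\psi_n:=\Psi(\rho_n)$ and $\tilde\psi_n:=\Psi(\tilde\rho_n)$, the closeness of the data and Lipschitz continuity of $\Psi$ give $\|\psi_n-\tilde\psi_n\|_{H^s}=O(1/n)$, while the Taylor estimate with $d^2\Psi$ bounded yields, at $x^\ast$,
\[
 (\tilde\psi_n-\psi_n)(x^\ast)=\tfrac1n\big(d_{\rho_n}\Psi(\bar\rho)\big)(x^\ast)+o(1/n),\qquad \big|\big(d_{\rho_n}\Psi(\bar\rho)\big)(x^\ast)-v^\ast\big|\le C\,\|g_n\|_{H^s}\,\|\bar\rho\|_{H^s}.
\]
Taking $\epsilon$ small enough that $C\epsilon\|\bar\rho\|_{H^s}<|v^\ast|/2$ forces the flow shift at $x^\ast$ to be $\tfrac1n w_n$ with $w_n$ bounded away from $0$ and close to the direction $\omega$, uniformly in $n$; this is the sole place the non-degeneracy of Lemma \ref{lemma_dense} enters.

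Now decompose
\[
 \Phi(\rho_n)-\Phi(\tilde\rho_n)=\big(\rho_\bullet\circ\psi_n^{-1}-\rho_\bullet\circ\tilde\psi_n^{-1}\big)-\tfrac1n\,\bar\rho\circ\tilde\psi_n^{-1}+\big(g_n\circ\psi_n^{-1}-g_n\circ\tilde\psi_n^{-1}\big).
\]
The first term tends to $0$ in $H^s$ by continuity of composition (the flows are $O(1/n)$-close and $\rho_\bullet$ is fixed), and the second is $O(1/n)$; so the lower bound must come from the last, oscillatory term. The functions $g_n\circ\psi_n^{-1}$ and $g_n\circ\tilde\psi_n^{-1}$ are frequency-$n$ packets carried to a common region near $\psi_n(x^\ast)$, with phases differing by $n\,\omega\cdot(\psi_n^{-1}-\tilde\psi_n^{-1})$, a quantity that by the previous step converges to a nonzero multiple of $|v^\ast|$ on a set of positive measure. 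Isolating the top-order $H^s$ contribution (the derivatives that fall on the oscillation), using $\cos A-\cos B=-2\sin\frac{A+B}2\sin\frac{A-B}2$, and averaging the fast phase ($\sin^2$ averages to $1/2$) should give $\|g_n\circ\psi_n^{-1}-g_n\circ\tilde\psi_n^{-1}\|_{H^s}\ge c_0>0$ with $c_0$ proportional to $\epsilon$, whence $\liminf_n\|\Phi(\rho_n)-\Phi(\tilde\rho_n)\|_{H^s}\ge c_0$.

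The main obstacle is exactly this last lower bound. An upper bound for the $H^s$ distance of two shift-related high-frequency packets is automatic, but the matching lower bound requires extracting the leading oscillatory symbol and verifying that the phase difference does not wash out after $s$ differentiations, while controlling the lower-order terms in which derivatives fall on the amplitude $\chi$ or on the diffeomorphisms $\psi_n^{-1},\tilde\psi_n^{-1}$; this is where the non-degeneracy $v^\ast\neq 0$ and the alignment $\omega \parallel v^\ast$ are essential. A secondary difficulty, that the differential is evaluated at the moving base point $\rho_n$ rather than at $\rho_\bullet$ (the packet $g_n$ does not tend to $0$ in $H^s$), is handled by choosing the amplitude $\epsilon$ small: boundedness of $d^2\Psi$ keeps $\big(d_{\rho_n}\Psi(\bar\rho)\big)(x^\ast)$ within $O(\epsilon)$ of $v^\ast$, hence bounded away from $0$ uniformly in $n$.
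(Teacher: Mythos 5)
Your overall skeleton coincides with the paper's: reduce to balls around points of the dense set $S$, perturb by $\tfrac1n\bar\rho$ so that the Taylor expansion of $\Psi$ (second derivative controlled via smoothness/analyticity) forces $|\tilde\psi_n(x^\ast)-\psi_n(x^\ast)|\gtrsim m/n$, and then detect this $O(1/n)$ displacement of the flow by a feature added to both initial data. Where you diverge is the detecting feature: you use a high-frequency packet $g_n=\alpha n^{-s}\chi\cos(n\,\omega\cdot x)$ with \emph{fixed} support, hoping the $O(1/n)$ shift of the flow produces an $O(1)$ phase decoherence, whereas the paper uses bumps $w_n$ of constant $H^s$-norm supported in balls of \emph{shrinking} radius $r_n=m/(8nL)$, so that the two transported bumps $w_n\circ(\varphi^{(n)})^{-1}$ and $w_n\circ(\tilde\varphi^{(n)})^{-1}$ end up with disjoint, well-separated supports and the inequality $\|f-g\|_s\geq \tilde K(\|f\|_s+\|g\|_s)$ for such functions finishes the argument immediately.

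The gap is exactly where you flag it: the lower bound $\|g_n\circ\psi_n^{-1}-g_n\circ\tilde\psi_n^{-1}\|_s\geq c_0>0$ is asserted (``should give'') but not proved, and it is the entire content of the proposition. Two concrete problems. First, the relevant phase shift is not $n\,\omega\cdot(\tilde\psi_n-\psi_n)(x^\ast)\approx\omega\cdot v^\ast=|v^\ast|$ but $n\,\omega\cdot(\psi_n^{-1}-\tilde\psi_n^{-1})$ evaluated near $\psi_n(x^\ast)$, which differs by $o(1)$ from $\omega\cdot[d_{x^\ast}\psi_n]^{-1}v^\ast$; since $[d_{x^\ast}\psi_n]^{-1}$ is essentially an arbitrary determinant-one matrix (close only to $[d_{x^\ast}\Psi(\rho_\bullet)]^{-1}$, not to the identity), the alignment $\omega\parallel v^\ast$ does not prevent this quantity from vanishing or from being a multiple of $2\pi$, so the decoherence can wash out; you would need to choose $\omega$ adapted to $[d_{x^\ast}\Psi(\rho_\bullet)]^{-1}v^\ast$ and control the $O(\epsilon)$ error in the Jacobian. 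Second, even granting a nondegenerate phase shift, a lower bound on the $H^s$ norm of $\chi(\psi_n^{-1})\bigl(\cos(n\,\omega\cdot\psi_n^{-1})-\cos(n\,\omega\cdot\tilde\psi_n^{-1})\bigr)$ for non-integer $s>2$ and phase functions that are merely $H^s$ (hence $C^1$ but not smooth) is a genuine oscillatory estimate not available in the paper's toolbox; the shrinking-support construction is designed precisely to avoid it. A minor further point: your claim that $\rho_\bullet\circ\psi_n^{-1}-\rho_\bullet\circ\tilde\psi_n^{-1}\to0$ in $H^s$ needs an approximation argument (the base points $\psi_n$ do not converge and composition is not uniformly continuous -- that is the very phenomenon being proved); the paper instead discards this term by observing that the transported $\rho_\bullet$ and $w_n$ pieces live in regions separated by $d/2$.
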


\begin{proof}
Let $S \subseteq U$ be as in Lemma \ref{lemma_dense}. Take an arbitrary $\rho_\bullet \in S$. In successive steps we will choose $R_\ast > 0$ and prove that
\[
 \Phi:B_R(\rho_\bullet) \subseteq U \to H^s(\R^2)
\]
is not uniformly continuous for any $0 < R \leq R_\ast$. Here we denote by $B_R(\rho_\bullet) \subset H^s(\R^2)$ the ball of radius $R$ around $\rho_\bullet$. As $S$ is dense this is clearly sufficient to prove the proposition.\\
Fix $x^\ast \in \R^2$ and $\bar \rho \in H^s(\R^2)$ with
\[
 m:=|\left(d_{\rho_\bullet}\Psi(\bar \rho)\right)(x^\ast)| > 0
\]
as guaranteed by Lemma \ref{lemma_dense}. Here $|\cdot|$ is the Euclidean norm in $\R^2$. Define $\varphi_\bullet=\Phi(\rho_\bullet)$ and let
\[
 d:=\operatorname{dist}\left(\varphi_\bullet(\operatorname{supp}\rho_\bullet),\varphi_\bullet(B_1(x^\ast))\right) > 0
\]
where $B_1(x^\ast) \subseteq \R^2$. By the Sobolev imbedding we fix $\tilde C > 0$ with
\begin{equation}\label{sobolev}
 ||f||_{C^1} \leq \tilde C ||f||_s
\end{equation}
for all $f \in H^s(\R^2;\R^2)$. Choose $R_1 > 0$ and $C_1 > 0$ with
\begin{equation}\label{below_above}
 \frac{1}{C_1} ||f||_s \leq ||f \circ \varphi^{-1}||_s \leq C_1 ||f||_s
\end{equation}
for all $f \in H^s(\R^2)$ and for all $\varphi \in \Psi(B_{R_1}(\rho_\bullet))$ which is possible due to the continuity of composition -- see \cite{composition}. Using the Sobolev imbedding \eqref{sobolev} we take $0 < R_2 \leq R_1$ and $L > 0$ with
\begin{equation}\label{lipschitz}
|\varphi(x)-\varphi(y)| < L |x-y| \mbox{ and } |\varphi(x)-\varphi_\bullet(x)| < d/4
\end{equation}
for all $x,y \in \R^2$ and $\varphi \in \Psi(B_{R_2}(\rho_\bullet))$.\\
Consider the Taylor expansion of $\Psi$ around $\rho_\bullet$
\[
 \Psi(\rho_\bullet+h)=\Psi(\rho_\bullet) + d_{\rho_\bullet}\Psi(h) + \int_0^1 (1-s) d^2_{\rho_\bullet + s h} \Psi(h,h) \;ds
\]
for $h \in H^s(\R^2)$. In order to estimate $d^2\Psi$ we choose $0 < R_3 \leq R_2$ such that
\[
 ||d^2_{\rho}\Psi(h_1,h_2)||_s \leq K ||h_1||_s ||h_2||_s
\]
and
\[
 ||d^2_{\rho_1}\Psi(h_1,h_2)-d^2_{\rho_2}\Psi(h_1,h_2)||_s \leq K ||\rho_1-\rho_2||_s ||h_1||_s ||h_2||_s
\]
for all $\rho, \rho_1, \rho_2 \in B_{R_3}(\rho_\bullet)$ and for all $h_1, h_2 \in H^s(\R^2)$ which is possible due to the smoothness of $\Psi$. Finally we choose $0 < R_\ast \leq R_3$ such that
\[
 \tilde C K ||\bar \rho||_s R_\ast^2/4 + \tilde C K ||\bar \rho||_s R_\ast < m/4 
\]
Now fix $0 < R \leq R_\ast$. We will construct two sequences of initial values
\[
 (\rho_0^{(n)})_{n \geq 1},(\tilde \rho_0^{(n)})_{n \geq 1} \subseteq B_R(\rho_\bullet)
\]
with $\lim_{n \to \infty} ||\rho_0^{(n)}-\tilde \rho_0^{(n)}||_s = 0$ whereas
\[
 \limsup_{n \to \infty} ||\Phi(\rho_0^{(n)})-\Phi(\tilde \rho_0^{(n)})||_s > 0
\]
Define the radii $r_n=\dfrac{m}{8nL}$ and choose $w_n \in H^s(\R^2)$ with 
\[
 \operatorname{supp} w_n \subseteq B_{r_n}(x^\ast) \quad \mbox{and} \quad ||w_n||_s = R/2
\]
We choose the initial values as
\[
 \rho_0^{(n)}=\rho_\bullet + w_n \quad \mbox{and} \quad \tilde \rho_0^{(n)}=\rho_\bullet + w_n + \frac{1}{n} \bar \rho
\]
For some $N$ we clearly have
\[
 (\rho_0^{(n)})_{n \geq N},(\tilde \rho_0^{(n)})_{n \geq N} \subseteq B_R(\rho_\bullet)
\]
and $\operatorname{supp}w_n \subseteq B_1(x^\ast)$ for $n \geq N$. By taking $N$ large enough we can also ensure 
\begin{equation}\label{small}
 \tilde C K \frac{1}{n} ||\bar \rho||_s < m/4,\quad \forall n \geq N
\end{equation}
Furthermore
\[
 ||\rho_0^{(n)}-\tilde \rho_0^{(n)}||_s = ||\frac{1}{n} \bar \rho||_s \to 0
\]
as $n \to \infty$. We introduce
\[
 \varphi^{(n)}=\Psi(\rho_0^{(n)}) \quad \mbox{and} \quad \tilde \varphi^{(n)}=\Psi(\tilde \rho_0^{(n)})
\]
With this we have
\[
 \Phi(\rho_0^{(n)})=\rho_0^{(n)} \circ (\varphi^{(n)})^{-1} \quad \mbox{and} \quad \Phi(\tilde \rho_0^{(n)}) = \tilde \rho_0^{(n)} \circ (\tilde \varphi^{(n)})^{-1}
\]
Hence
\[
||\Phi(\rho_0^{(n)})-\Phi(\tilde \rho_0^{(n)})||_s = ||(\rho_\bullet+w_n) \circ (\varphi^{(n)})^{-1}-(\rho_\bullet+w_n+\frac{1}{n}\bar \rho) \circ (\tilde \varphi^{(n)})^{-1}||_s
\]
From \eqref{below_above} we conclude
\begin{align*}
& \limsup_{n \to \infty} ||(\rho_\bullet+w_n) \circ (\varphi^{(n)})^{-1}-(\rho_\bullet+w_n+\frac{1}{n}\bar \rho) \circ (\tilde \varphi^{(n)})^{-1}||_s =\\
& \limsup_{n \to \infty} ||(\rho_\bullet+w_n) \circ (\varphi^{(n)})^{-1}-(\rho_\bullet+w_n) \circ (\tilde \varphi^{(n)})^{-1}||_s 
\end{align*}
Note that by \eqref{lipschitz} we have for $n \geq N$
\[
 \operatorname{supp}(\rho_\bullet \circ (\varphi^{(n)})^{-1}),\operatorname{supp}(\rho_\bullet \circ (\tilde \varphi^{(n)})^{-1}) \subseteq \varphi_\bullet(\operatorname{supp}\rho_\bullet)+B_{d/4}(0)
\]
and
\[
 \operatorname{supp}(w_n \circ (\varphi^{(n)})^{-1}),\operatorname{supp}(w_n \circ (\tilde \varphi^{(n)})^{-1}) \subseteq \varphi_\bullet(B_1(x^\ast))+B_{d/4}(0)
\]
where we use $A+B=\{a+b \;|\; a \in A, b \in B\}$. So the $\rho_\bullet$ and $w_n$ terms are supported in disjoint sets. This allows us (see \cite{sqg}) to estimate with a constant $\bar C > 0$
\begin{align*}
 &\limsup_{n \to \infty} ||(\rho_\bullet+w_n) \circ (\varphi^{(n)})^{-1}-(\rho_\bullet+w_n) \circ (\tilde \varphi^{(n)})^{-1}||_s \geq \\
 &\limsup_{n \to \infty} \bar C ||w_n \circ (\varphi^{(n)})^{-1}-w_n \circ (\tilde \varphi^{(n)})^{-1}||_s 
\end{align*}
The goal is to separate the two $w_n$ expressions by showing that their supports are also disjoint in a suitable way. We have
\[
 \varphi^{(n)}=\Psi(\rho_\bullet + w_n)=\Psi(\rho_\bullet)+d_{\rho_\bullet}\Psi(w_n) + \int_0^1 (1-s) d^2_{\rho_\bullet + s w_n}\Psi(w_n,w_n) \;ds 
\]
resp.
\begin{align*}
 \tilde \varphi^{(n)}&=\Psi(\rho_\bullet + w_n + \frac{1}{n}\bar \rho)=\Psi(\rho_\bullet)+d_{\rho_\bullet}\Psi(w_n + \frac{1}{n}\bar \rho) \\
&+ \int_0^1 (1-s) d^2_{\rho_\bullet + s (w_n+\frac{1}{n}\bar \rho)}\Psi(w_n+\frac{1}{n}\bar\rho,w_n+\frac{1}{n}\bar\rho) \;ds 
\end{align*}
Thus
\[
\tilde \varphi^{(n)} - \varphi^{(n)}=d_{\rho_\bullet}\Psi(\frac{1}{n}\bar \rho) + I_1 + I_2 + I_3
\]
where
\[
 I_1=\int_0^1 (1-s) \left(d^2_{\rho_\bullet + s (w_n+\frac{1}{n}\bar \rho)}\Psi(w_n,w_n)-d^2_{\rho_\bullet + s w_n}\Psi(w_n,w_n)\right)\;ds
\]
and
\[
 I_2=2 \int_0^1 (1-s) d^2_{\rho_\bullet + s (w_n+\frac{1}{n}\bar \rho)}\Psi(w_n,\frac{1}{n}\bar\rho)\;ds
\]
and
\[
 I_3= \int_0^1 (1-s) d^2_{\rho_\bullet + s (w_n+\frac{1}{n}\bar \rho)}\Psi(\frac{1}{n}\bar \rho,\frac{1}{n}\bar\rho)\;ds
\]
Using the estimates for $d^2\Psi$ we have
\[
 ||I_1||_s \leq K \frac{1}{n} ||\bar\rho||_s R^2/4,\quad ||I_2||_s \leq 2K \frac{1}{n}||\bar \rho||_s R/2,\quad ||I_3||_s \leq K \frac{1}{n^2} ||\bar \rho||_s^2
\]
Using \eqref{sobolev}, \eqref{small} and by the choice of $R_\ast$ we have
\[
 |I_1(x^\ast)| + |I_2(x^\ast)| + |I_3(x^\ast)| < \frac{m}{2n}
\]
Hence
\[
 |\tilde \varphi^{(n)}(x^\ast)-\varphi^{(n)}(x^\ast)| \geq |\left(d_{\rho_\bullet}\Psi(\bar \rho)\right)(x^\ast)|/n - \frac{m}{2n}=\frac{m}{2n}
\]
We have by \eqref{lipschitz} and the choice of $r_n$
\[
 \operatorname{supp}(w_n \circ (\tilde \varphi^{(n)})^{-1}) \subseteq B_{Lr_n}(\tilde \varphi^{(n)}(x^\ast))=B_{m/(8n)}(\tilde \varphi^{(n)}(x^\ast))
\]
resp.
\[
  \operatorname{supp}(w_n \circ (\varphi^{(n)})^{-1}) \subseteq B_{Lr_n}(\tilde \varphi^{(n)}(x^\ast))=B_{m/(8n)}(\varphi^{(n)}(x^\ast))
\]
which shows that the supports of the two $w_n$ terms are in such a way disjoint that we can separate the $H^s$ norms with a constant (see \cite{sqg}) to get
\begin{align*}
 ||w_n \circ (\varphi^{(n)})^{-1}-w_n \circ (\tilde \varphi^{(n)})^{-1}||_s &\geq \tilde K (||w_n \circ (\varphi^{(n)})^{-1}||_s + ||w_n \circ (\tilde \varphi^{(n)})^{-1}||_s) \\
 &\geq 2\tilde K \frac{1}{C_1} ||w_n||_s=\frac{\tilde K R}{C_1}
\end{align*}
where we used \eqref{below_above}. Therefore
\[
 \limsup_{n \to \infty} ||\Phi(\rho_0^{(n)})-\Phi(\tilde \rho_0^{(n)})||_s \geq \frac{\tilde K \bar C R}{C_1}
\]
whereas
\[
 ||\rho_0^{(n)}-\tilde \rho_0^{(n)}||_s \to 0 \quad \mbox{as} \quad n \to \infty
\]
showing that $\Phi$ is not uniformly continuous on $B_R(\rho_\bullet)$. As $0 < R \leq R_\ast$ is arbitrary the result follows.
\end{proof}

\bibliographystyle{plain}

\begin{thebibliography}{999}

%\bibitem{adams} R. Adams, {\em Sobolev spaces}, Academic Press, New York, 1975.

%\bibitem{alex} A. Alexiewicz, W. Orlicz: {\em On analytic vector-valued functions of a real variable}, Stud. Math. 12, 108-111 (1951).

%\bibitem{alex2} A. Alexiewicz, W. Orlicz: {\em Analytic operations in real Banach spaces}, Stud. Math. 14, 57-78 (1953).

\bibitem{arnold} V. Arnold: {\em Sur la g{\'e}ometrie differentielle des
groupes de Lie
de dimension infinie et ses applications {\`a} l'hydrodynamique des
fluids parfaits},
Ann. Inst. Fourier, $\bf 16$, 1(1966), 319--361.

%\bibitem{cantor_thesis} M. Cantor: {\em Global analysis over noncompact spaces}, Thesis, 1973.

%\bibitem{cantor} M. Cantor: {\em Groups of diffeomorphisms of $R^n$ and the flow of a perfect fluid}, Bull. Am. Math. Soc. 81, 205-208 (1975).

%\bibitem{chemin} J. Y. Chemin: {\em R\'egularit\'e de la trajectoire des particules d'un fluide parfait incompressible remplissant l'espace}, J. Math. Pures Appl. (9) 71 (1992), no. 5, 407-417. 

%\bibitem{dieudonne} J. Dieudonn\'e: {\em Foundations of modern analysis}, Pure and Applied Mathematics. New York-London, 1960.
%\bibitem{chemin}  J.-Y. Chemin: {\em Perfect incompressible fluids}, translated from the 1995 French original by Isabelle Gallagher and Dragos Iftimie. Oxford Lecture Series in Mathematics and its Applications, 14. The Clarendon Press, Oxford University Press, New York, 1998

%\bibitem{cmt1} P. Constantin, A. J. Majda, E. Tabak: {\em Singular front formation in a model for quasigestrophic flow}, Phy. Fluids 6 (1994), no. 1, 911

%\bibitem{cmt2} P. Constantin, A. J. Majda, E. Tabak: {\em Formation of strong fronts in the 2-D quasigeostrophic thermal active scalar}, Nonlinearity 7 (1994), no. 6, 14951533

%\bibitem{constantin} P. Constantin, V. Vicol,  J. Wu: {\em Analyticity of Lagrangian trajectories for well posed inviscid incompressible fluid models}, Adv. Math. 285 (2015), 352–-393.

\bibitem{cordoba} D. Cordoba, F. Gancedo, R. Orive: {\em Analytical behavior of two-dimensional incompressible flow in porous media}, J. Math. Phys. 48 (2007), no. 6, 065206, 19 pp.

\bibitem{ebin_marsden} D. Ebin, J. Marsden: {\em Groups of diffeomorphisms and
the motion of an incompressible fluid}, Ann. Math., $\bf 92$(1970), 102--163.

%\bibitem{euler} L. Euler: {\em Principes g\'en\'eraux du mouvement des fluides}, M\'emoires de l'Acad\'emie des Sciences de Berlin 11, 274-315 (1757). 

%\bibitem{fischer} A. E. Fischer, J. E. Marsden: {\em The Einstein evolution equations as a first-order quasi-linear symmetric hyperbolic system I}, Comm. Math. Phys., 28, (1972), 1-38

%\bibitem{frisch} U. Frisch, V. Zheligovsky: {\em A very smooth ride in a rough sea}, arXiv:1212.4333 (2012)

%\bibitem{gunter} N. M. Gunter: {\em On the motion of a fluid contained in a given moving vessel}, Izvestia AN USSR, Sect. Phys. Math. 1323-1328,1503-1532 (1926); 621-656, 1139-1162 (1927); 9-30 (1928).

\bibitem{composition} H. Inci, T. Kappeler, P. Topalov: {\em On the regularity of the composition of diffeomorphisms}, Mem. Amer. Math. Soc. 226 (2013), no. 1062

%\bibitem{thesis} H. Inci: {\em On the well-posedness of the incompressible Euler equation}, Thesis, 2013

%\bibitem{euler} H. Inci: {\em On the regularity of the solution map of the incompressible Euler equation}, Dyn. Partial Differ. Equ. 12 (2015), no. 2, 97113

%\bibitem{b_family} H. Inci: {\em On the well-posedness of the Holm-Staley b-family of equations}, Journal of Nonlinear Mathematical Physics, 23:2, 213--233, (2016)

\bibitem{lagrangian} H. Inci: {\em On a Lagrangian formulation of the incompressible Euler equation}, J. Partial Differ. Equ. 29 (2016), no. 4.

\bibitem{sqg} H. Inci: {\em On the well-posedness of the inviscid SQG equation}, J. Differential Equations 264 (2018), no. 4, 2660–2683.

%\bibitem{hyperelastic} H. Inci: {\em On the well-posedness of the hyperelastic rod equation}, arXiv:1611.01188.

%\bibitem{boussinesq} H. Inci: {\em On the well-posedness of the inviscid 2D Boussinesq equation}, arXiv:1611.07944.


%\bibitem{kato} T. Kato: {\em Nonstationary flows of viscous and ideal fluids in $\R^3$}, J.  Funct. Anal. 9, 296-305 (1972).

%\bibitem{lang} S. Lang: {\em Differential and Riemannian Manifolds}, 3rd edition, Springer-Verlag, New York, 1995. 

%\bibitem{lwp} D. Lannes, F. Linares, J.-C. Saut, Jean-Claude: {\em The Cauchy problem for the Euler-Poisson system and derivation of the Zakharov-Kuznetsov equation}, Studies in phase space analysis with applications to PDEs, 181–213, Progr. Nonlinear Differential Equations Appl., 84, Birkh\"auser/Springer, New York, 2013. 

%\bibitem{majda} A. Majda: {\em Compressible fluid flow and systems of conservation laws in several space variables}, Applied Mathematical Sciences 53. Springer-Verlag, New York, 1984.

%\bibitem{samko} S. G. Samko: {\em Hypersingular integrals and their applications}, Analytical Methods and Special Functions, 5. Taylor \& Francis, Ltd., London, 2002.

%\bibitem{sqg_holder} P. Washabaugh, {\em The SQG equation as a geodesic equation}, Arch. Ration. Mech. Anal. 222 (2016), no. 3, 1269--1284.
%\bibitem{lichtenstein} L. Lichtenstein: {\em \"Uber einige Existenzprobleme der Hydrodynamik homogener unzusammendr\"uckbarer, reibungsloser Fl\"ussigkeiten und die Helmholtzschen Wirbels\"atze}, Math. Z. 23, 89-154 (1925); 26, 196-323 (1927); 28, 387-415 (1928); 32, 608-640 (1930).

%\bibitem{majda} A. J. Majda, A. L. Bertozzi: {\em Vorticity and incompressible flow}, Cambridge Texts in Applied Mathematics. Cambridge: Cambridge University Press., 2002.

%\bibitem{palais} R. S. Palais: {\em Natural operations on differential forms}, Trans. Am. Math. Soc. 92, 125-141 (1959).

%\bibitem{serfati} P. Serfati: {\em Structures holomorphes \`a faible r\'egularit\'e spatiale en m\'ecanique des fluides}, J. Math. Pures Appl., IX. S\'er. 74, No.2, 95-104 (1995). 

%\bibitem{shnirelman} A. Shnirelman: {\em On the Analyticity of Particle Trajectories in the Ideal Incompressible Fluid}, arXiv:1205.5837v1 (2012).

%\bibitem{analyticity} E. F. Whittlesey: {\em Analytic functions in Banach spaces}, Proc. Am. Math. Soc. 16, 1077-1083 (1965).

\end{thebibliography}

\flushleft
\author{ Hasan \.{I}nci\\
Ko\c{c} \"Universitesi Fen Fak\"ultesi\\
Rumelifeneri Yolu\\
34450 Sar{\i}yer \.{I}stanbul T\"urkiye\\
        {\it email: } {hinci@ku.edu.tr}
}

\end{document}